\newcommand{\dd}{{\mathrm d}}
\newcommand{\E}{\mathbb{E}}
\newcommand{\Z}{\mathbb{Z}}
\newcommand{\ZZ}{\widetilde\Z}
\newcommand{\R}{\mathbb{R}}
\newcommand{\Pb}{\mathbb{P}}
\theoremstyle{plain}
\newtheorem{theorem}{Theorem}
\newtheorem{lemma}{Lemma}
\newtheorem{corollary}{Corollary}
\theoremstyle{definition}
\newtheorem*{definition*}{Definition}
\theoremstyle{remark}
\newtheorem*{claim*}{Claim}
\newtheorem*{example*}{Example}
\begin{document}

\title{Site Percolation on a Disordered \\ Triangulation of the Square Lattice}
\author{Leonardo T. Rolla}
\maketitle


\begin{abstract}
In
this paper we consider independent site percolation in a triangulation of $\R^2$ given by adding $\sqrt{2}$-long diagonals to the usual graph $\Z^2$.
We conjecture that $p_c=\frac{1}{2}$ for any such graph, and prove it for almost every such graph.
\end{abstract}

\section{The model}

Let $\ZZ^2$ denote the set of $1\times 1$ squares in $\R^2$ having all its corners in $\Z^2$, and define the diagonal configuration space by
\[
\Omega=\big\{
\vcenter{\hbox{\includegraphics[page=1,height=1.4em]{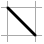}}}
,
\vcenter{\hbox{\includegraphics[page=2,height=1.4em]{figures/diagonals}}}
\big\}^{{\ZZ^2}}
.
\]
Let $\Sigma$ denote the color configuration space
\[
\Sigma=\{ 
\vcenter{\hbox{\includegraphics[page=1,height=1em]{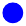}}}
,
\vcenter{\hbox{\includegraphics[page=2,height=1em]{figures/colors}}}
\}^{\Z^2}
.
\]
Examples of a diagonal configuration $\omega$ and a color configuration $\sigma$ are
\[
\vcenter{\hbox{\includegraphics[page=3,width=.3\textwidth]{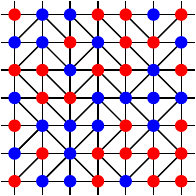}}}
\qquad
\text{and}
\qquad
\vcenter{\hbox{\includegraphics[page=2,width=.3\textwidth]{figures/sample}}}
.
\]

Let $P_p$ denote the probability measure on $\Sigma$ given by
\[
P_p(\sigma_x=\vcenter{\hbox{\includegraphics[page=2,height=.8em]{figures/colors}}})=\boldsymbol{\color{red}{p}}
\qquad
\text{and}
\qquad
P_p(\sigma_x=\vcenter{\hbox{\includegraphics[page=1,height=.8em]{figures/colors}}})=\boldsymbol{\color{blue}{1-p}}
,
\]
independently over $x\in \Z^2$, and let $P_p^\omega$ denote the law of the percolation process on the graph obtained by adding the diagonals in $\omega$ to the usual graph $\Z^2$.
In general, the resulting graph does not have any symmetry, but still there is a critical parameter $p_c(\omega)$ at which the probability of having an infinite red cluster jumps from $0$ to $1$ (this is a tail event).


Since this graph is a triangulation, site percolation is self-dual, that is, the only way to prevent a given red connection is with a transversal blue connection and vice-versa.
This is illustrated by
\[
{\hbox{\includegraphics[page=1,width=.35\textwidth]{figures/sample}}}
,
\]
where existence of a left-right red crossing prevents a top-bottom blue crossing, and a left-right blue crossing prevents a top-bottom red crossing.

\section{Results}

Because of self-duality, the obvious conjecture%
\footnote{If $p_c$ were smaller, there would be both an infinite blue and an infinite red cluster for any $p\in(p_c,1-p_c)$, and if $p_c$ were larger, there would be both infinitely many red and blue circuits surrounding the origin for any $p\in(1-p_c,p_c)$.}
is that $p_c(\omega)=\frac{1}{2}$.
In this paper we show that this is true if the diagonal configuration $\omega$ is obtained by tossing a fair coin for each square $z \in \ZZ^2$.

\medskip

Let $Q$ denote the probability on $\Omega$
given by
\[
Q\big(\omega_z=\vcenter{\hbox{\includegraphics[page=1,height=1.4em]{figures/diagonals}}}\big)
=
Q\big(\omega_z=\vcenter{\hbox{\includegraphics[page=2,height=1.4em]{figures/diagonals}}}\big)
=
\tfrac{1}{2}
,
\]
independently over different squares $z \in \ZZ^2$.
Our sample space will be $\Omega \times \Sigma$, so the process described above is governed by the ``quenched measure''
\[
P_p^\omega=\delta_\omega\times P_p
.
\]
In this paper we will consider the ``annealed measure'' given by
\[
\Pb_p=Q\times P_p=\int_\Omega P_p^\omega \, \dd Q(\omega)
.
\]

\begin{theorem}
\label{theopc}
For the annealed process,
\[
\Pb_p(\mbox{there is an infinite red cluster}) =
\begin{cases}
0,& p\leqslant\frac 12, \\
1,& p>\frac12.
\end{cases}
\]
In particular, by Fubini-Tonelli,
\(
p_c(\omega)=\frac{1}{2} \text{ for almost every } \omega
.
\)
\end{theorem}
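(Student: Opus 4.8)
The plan is to exploit the self-duality of the triangulation together with the translation-invariance and ergodicity of the annealed measure $\Pb_p$, via a Russo--Seymour--Welsh (RSW) argument adapted to this disordered setting. The key point is that, although no individual graph $\omega$ has a lattice symmetry, the \emph{annealed} model $\Pb_p = Q \times P_p$ is invariant under the symmetries of $\Z^2$ (horizontal and vertical translations, reflections, and the $90^\circ$ rotation), because a rotation or reflection of $\Z^2$ simply permutes the two diagonal orientations in each square and $Q$ is the uniform product measure. In particular, at $p = \tfrac12$ the annealed model is invariant under swapping red$\leftrightarrow$blue combined with a $90^\circ$ rotation, so a left--right red crossing of an $n \times n$ square and a top--bottom blue crossing have the same probability; by self-duality (one of them must occur) each has probability exactly $\tfrac12$.

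From this exact crossing probability at criticality one runs the standard RSW machinery: first I would establish an RSW-type inequality showing that bounded-aspect-ratio rectangle crossing probabilities at $p=\tfrac12$ stay bounded away from $0$ and $1$ uniformly in scale, then use the square-root trick and a gluing construction with the FKG inequality (which holds for $\Pb_p$ since both $Q$ and $P_p$ are product measures and crossing events are increasing in the appropriate sense once one fixes the diagonals — more care is needed here, see below) to show that annuli are crossed by red circuits with probability bounded below, uniformly in scale. By a Borel--Cantelli argument over a sequence of disjoint annuli this forces $\Pb_{1/2}(\text{infinite red cluster}) = 0$, and symmetrically for blue; this handles the case $p = \tfrac12$, and $p < \tfrac12$ follows by monotonicity. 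For $p > \tfrac12$, a standard finite-size-criterion / sharp-threshold argument (or a direct comparison using the positive-density-of-good-boxes idea) upgrades the lower bound on circuit probabilities to the existence of an infinite red cluster with probability $1$ by ergodicity.

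The main obstacle I anticipate is the FKG/positive-association step. The annealed measure $\Pb_p$ is a product over two different kinds of randomness (diagonals and colors), and while $P_p$ is positively associated and the diagonal randomness is independent of it, a red crossing event is \emph{not} monotone in the diagonal variables in a single fixed direction — adding a diagonal can create a red connection but can equally create a blocking blue connection. So one cannot directly apply Harris--FKG to $\Pb_p$ for crossing events. The cleanest fix is to condition on $\omega$ throughout: for fixed $\omega$ the measure $P_p^\omega$ is a product measure on a triangulation, crossing events are increasing in the colors, self-duality holds graph-by-graph, and FKG applies; one then carries out RSW and the circuit construction \emph{quenched}, obtaining bounds that depend on $\omega$ only through quantities (like the crossing probability of a symmetric rectangle) whose \emph{$Q$-average} we control by the symmetry argument above. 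Averaging back over $\omega$ at the end, and using that the relevant crossing probabilities are, on average, pinned at $\tfrac12$, is what makes the annealed statement work; I expect the technical heart of the paper to be making this quenched-RSW-then-average scheme precise, in particular getting a scale-uniform lower bound on the $Q$-expectation of quenched circuit probabilities.

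Finally, the ``in particular'' clause is immediate: once $\Pb_p(\text{infinite red cluster}) = \int_\Omega P_p^\omega(\text{infinite red cluster})\,\dd Q(\omega)$ equals $0$ for $p \le \tfrac12$ and $1$ for $p > \tfrac12$, Fubini--Tonelli gives that for each rational $p > \tfrac12$ the quenched probability is $1$ for $Q$-a.e.\ $\omega$ and for each rational $p < \tfrac12$ it is $0$ for $Q$-a.e.\ $\omega$; intersecting these countably many full-measure events and using monotonicity of $p \mapsto P_p^\omega(\cdot)$ in $p$ pins $p_c(\omega) = \tfrac12$ for $Q$-a.e.\ $\omega$.
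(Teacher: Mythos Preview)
Your diagnosis of the FKG obstacle is correct, but the proposed fix---condition on $\omega$, run RSW quenched, then average over $Q$---does not work, and this is the genuine gap. For a fixed $\omega$ the graph has no symmetry, so $P^\omega_{1/2}(\text{LR red crossing of an }n\times n\text{ box})$ need not equal $\tfrac12$; self-duality only tells you it is $1$ minus the top--bottom blue crossing probability. Knowing that the \emph{$Q$-average} of this quantity equals $\tfrac12$ gives no control on the $Q$-average of its square or of a product of several such quenched probabilities, which is exactly what a quenched RSW/gluing argument would output. So you cannot recover a scale-uniform lower bound on annealed circuit probabilities by ``quenched-RSW-then-average''; the nonlinearity of the RSW machinery defeats the averaging.

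The paper resolves this differently: it introduces a class of \emph{robust increasing} events---increasing in $\sigma$, and increasing in $\omega$ in a $\sigma$-dependent sense (a diagonal flip helps precisely when the red corners of that square prefer the new orientation, and is irrelevant on ``neutral'' squares)---and proves the Harris--FKG inequality directly for the \emph{annealed} measure $\Pb_p$ on such events. Connection events of the form ``$A$ is red-connected to $B$ within $\mathcal D$'' are robust increasing, so all the gluing in RSW and in the circuit construction is carried out under $\Pb_p$, where the needed symmetries are available. For $p>\tfrac12$ the paper does not invoke a generic finite-size criterion: it proves a Kesten-type lower bound on the expected number of pivotal sites (via an exploration adapted to the random diagonals) and feeds it into Russo's formula to get exponential decay of the blue crossing probability, then glues crossings of dyadic rectangles using the annealed FKG. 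Your final Fubini paragraph deducing $p_c(\omega)=\tfrac12$ almost surely is fine.
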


\bigskip

The proof is elementary and self-contained.
The main step is to establish the analogous of Harris-FKG inequality for the annealed measure.


We note that not all events which we would normally call ``increasing'' are positively correlated, because two such events may have conflicting requirements for the diagonals.
To overcome this, we define \emph{robust increasing} events, and prove positive correlations for this type of event only.

We then show with pictures how Smirnov's proof of RSW estimates~\cite{Russo78,SeymourWelsh78} and Kesten's proof of logarithmic expected number of pivotal sites~\cite{Kesten80} can both be adapted to the present model.
For completeness, we give the proof of Theorem~\ref{theopc} using Russo's formula.






\section{Robust increasing events}

An \emph{observable} is a measurable function $f:\Omega\times\Sigma\to\R$.
\begin{definition*}
We say that an observable $f$ is \emph{increasing in $\sigma$} if, for each pair $(\omega,\sigma)$, switching the color of any site $x$ from $\vcenter{\hbox{\includegraphics[page=1,height=.8em]{figures/colors}}}$ to $\vcenter{\hbox{\includegraphics[page=2,height=.8em]{figures/colors}}}$ increases (i.e., does not decrease) the value of $f$.
\end{definition*}

In order to discuss monotonicity with respect to the diagonal configuration $\omega$, we take into account the color configuration $\sigma$ to see whether it is
$\vcenter{\hbox{\includegraphics[page=1,height=1.2em]{figures/diagonals}}}$
or
$\vcenter{\hbox{\includegraphics[page=2,height=1.2em]{figures/diagonals}}}$
who
favors
the
$\vcenter{\hbox{\includegraphics[page=2,height=.8em]{figures/colors}}}$'s
more
than
the
$\vcenter{\hbox{\includegraphics[page=1,height=.8em]{figures/colors}}}$'s,
or the other way around.

Given a color configuration $\sigma$, each square $z\in\ZZ^2$ will be classified as having one of three types, depending on the colors of its four corners.

The first type consists of configurations whose symmetries make it impossible to decide whether
$\vcenter{\hbox{\includegraphics[page=2,height=.8em]{figures/colors}}}$'s
and
$\vcenter{\hbox{\includegraphics[page=1,height=.8em]{figures/colors}}}$'s
would
prefer
$\vcenter{\hbox{\includegraphics[page=1,height=1.2em]{figures/diagonals}}}$
or
$\vcenter{\hbox{\includegraphics[page=2,height=1.2em]{figures/diagonals}}}$.
\par
--
type N:
$
\vcenter{\hbox{\includegraphics[page=4,height=1.88em]{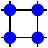}}},\
\vcenter{\hbox{\includegraphics[page=13,height=1.88em]{figures/types}}},\
\vcenter{\hbox{\includegraphics[page=7,height=1.88em]{figures/types}}},\
\vcenter{\hbox{\includegraphics[page=10,height=1.88em]{figures/types}}},\
\vcenter{\hbox{\includegraphics[page=1,height=1.88em]{figures/types}}},\
\vcenter{\hbox{\includegraphics[page=16,height=1.88em]{figures/types}}}
.
$

\begin{definition*}
We say that $f$ is \emph{robust} if flipping the diagonal at any square $z$ of type N does not change the value of $f$.
\end{definition*}

Squares that are not of type N will have type A or B depending on whether 
it is the
$\vcenter{\hbox{\includegraphics[page=2,height=.8em]{figures/colors}}}$'s
or
$\vcenter{\hbox{\includegraphics[page=1,height=.8em]{figures/colors}}}$'s
who prefer
$\vcenter{\hbox{\includegraphics[page=2,height=1.2em]{figures/diagonals}}}$
over
$\vcenter{\hbox{\includegraphics[page=1,height=1.2em]{figures/diagonals}}}$.

\par
--
type A:
$
\vcenter{\hbox{\includegraphics[page=11,height=1.88em]{figures/types}}},\
\vcenter{\hbox{\includegraphics[page=3,height=1.88em]{figures/types}}},\
\vcenter{\hbox{\includegraphics[page=9,height=1.88em]{figures/types}}},\
\vcenter{\hbox{\includegraphics[page=12,height=1.88em]{figures/types}}},\
\vcenter{\hbox{\includegraphics[page=15,height=1.88em]{figures/types}}}
. 
$
\par
--
type B:
$
\vcenter{\hbox{\includegraphics[page=6,height=1.88em]{figures/types}}},\
\vcenter{\hbox{\includegraphics[page=14,height=1.88em]{figures/types}}},\
\vcenter{\hbox{\includegraphics[page=8,height=1.88em]{figures/types}}},\
\vcenter{\hbox{\includegraphics[page=5,height=1.88em]{figures/types}}},\
\vcenter{\hbox{\includegraphics[page=2,height=1.88em]{figures/types}}}
.
$

\begin{definition*}
We say that a robust observable $f$ is \emph{increasing in $\omega$} if, for each pair $(\omega,\sigma)$, flipping the diagonal of any square $z$ from $\vcenter{\hbox{\includegraphics[page=1,height=1.2em]{figures/diagonals}}}$ to $\vcenter{\hbox{\includegraphics[page=2,height=1.2em]{figures/diagonals}}}$ will
increase $f$ for $z$ of type A and decrease $f$ for $z$ of type B.
An event $\mathcal{A}$ is called robust, increasing in $\sigma$ and $\omega$ if its indicator function is so.
\end{definition*}






\medskip

We mention that some events that would normally be called ``increasing'' are not robust.
For example, in a $3 \times 3$ square (what we call $3 \times 3$ contains $16$ sites), existence of a red path of length $3$ connecting the top-left and bottom-right corners is not robust.
Moreover, this event is not positively-correlated with existence of a red path of length $3$ connecting the top-right and bottom-left corners: they are in fact mutually exclusive.
The above events are not robust because they have requirements for diagonals even when the containing squares are of type N.
In the same direction, events requiring existence of disjoint paths are not robust in general.

On the other hand, and that is enough for our needs, for any sets $A,B \subseteq \Z^2$ and any domain $\mathcal{D}$ consisting of a collection of closed squares of $\ZZ^2$, the event ``$A$ is connected to $B$ by a red path in $\mathcal{D}$'' is both robust and increasing.

\medskip



\begin{lemma}
[Harris-FKG]
\label{lemmafkg}
Let $f$ and $g$ be robust non-negative observables, increasing in $\sigma$ and $\omega$.
Then \[\Pb_p(fg)\geqslant\Pb_p(f)\Pb_p(g).\]
\end{lemma}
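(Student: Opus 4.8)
The plan is to disentangle the two layers of randomness and apply the classical Harris--FKG inequality for product measures twice. Conditionally on the colour configuration $\sigma$, the diagonals are still governed by the product measure $Q$, and --- this is precisely what robustness buys us --- both $f(\cdot,\sigma)$ and $g(\cdot,\sigma)$ become monotone in $\omega$ with respect to one and the same coordinatewise, albeit $\sigma$-dependent, order. So one integrates out the diagonals first, and then the colours, each time using that increasing functions are positively correlated under a product measure. Before doing so, a standard reduction lets us assume that $f$ and $g$ depend on finitely many coordinates only: replace $f$ by $\E[f\mid\mathcal F_n]$, where $\mathcal F_n$ is generated by finitely many colours and finitely many diagonals (arranged so that every included square of $\ZZ^2$ carries its four corners); this conditional expectation is again non-negative, robust, and increasing in $\sigma$ and in $\omega$, because each of those requirements is local and thus survives conditioning on a sub-collection of coordinates, and $\E[f\mid\mathcal F_n]\to f$ by martingale convergence, after truncating at level $M\to\infty$ if $f$ is unbounded.

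Fix $\sigma$. Each square $z\in\ZZ^2$ then has a well-defined type N, A or B, determined solely by its four corner colours, so the classification is the same for $f$ and for $g$. Put on $\Omega$ the product order $\preceq_\sigma$ in which, at each square $z$, the orientation that a robust observable increasing in $\omega$ is required \emph{not to decrease} when flipped is declared the larger one --- one fixed orientation at type-A squares, the opposite one at type-B squares, and either choice (say, the same as at type-A squares) at type-N squares, where $f$ and $g$ do not depend on $\omega_z$ at all. By construction $f(\cdot,\sigma)$ and $g(\cdot,\sigma)$ are both $\preceq_\sigma$-non-decreasing, and since $Q$ is a product measure the classical inequality --- proved by the textbook induction on the number of coordinates, and valid for an arbitrary choice of total order on each factor (here each factor is even symmetric, so the order reversal at type-B squares is obviously harmless) --- gives
\[
\int_\Omega f(\omega,\sigma)\,g(\omega,\sigma)\,\dd Q(\omega)\ \geqslant\ F(\sigma)\,G(\sigma),\qquad\text{where}\quad F(\sigma):=\int_\Omega f\,\dd Q,\quad G(\sigma):=\int_\Omega g\,\dd Q.
\]

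Now integrate over $\sigma$. The functions $F$ and $G$ are non-negative and increasing in $\sigma$ in the usual order (red above blue): for each fixed $\omega$ the map $\sigma\mapsto f(\omega,\sigma)$ is non-decreasing by hypothesis, and integrating against $Q$ preserves this. Since $P_p$ is a product measure on $\Sigma$, a second application of the classical Harris--FKG inequality yields $\int_\Sigma FG\,\dd P_p\geqslant \int_\Sigma F\,\dd P_p\cdot\int_\Sigma G\,\dd P_p$. Combining the two steps with Tonelli's theorem,
\[
\Pb_p(fg)=\int_\Sigma\!\int_\Omega fg\,\dd Q\,\dd P_p\ \geqslant\ \int_\Sigma FG\,\dd P_p\ \geqslant\ \int_\Sigma F\,\dd P_p\cdot\int_\Sigma G\,\dd P_p=\Pb_p(f)\,\Pb_p(g),
\]
which is the assertion of the lemma.

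The one genuinely model-specific point --- and the reason robust increasing observables are singled out --- is the first step. For a generic ``increasing'' event the function $f(\cdot,\sigma)$ need not be monotone in $\omega$ along any fixed coordinatewise direction, since different squares, or different colourings of the same square, may pull the favoured orientation opposite ways (and then, as noted before the lemma, two such events can even be mutually exclusive), so the product-measure inequality simply does not apply. Robustness, via the type N/A/B trichotomy, is exactly the hypothesis that makes $\preceq_\sigma$ well-defined and renders $f(\cdot,\sigma)$ and $g(\cdot,\sigma)$ simultaneously $\preceq_\sigma$-increasing. I therefore expect the main work to be conceptual --- setting up the right order --- rather than technical: once that is in place, the two appeals to the classical inequality and the Tonelli bookkeeping above finish the proof.
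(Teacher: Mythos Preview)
Your argument is correct and is essentially the paper's own proof: fix $\sigma$, use robustness to get a common $\sigma$-dependent coordinatewise order on $\Omega$ under which both $f(\cdot,\sigma)$ and $g(\cdot,\sigma)$ are increasing, apply Harris--FKG for $Q$, and then apply it once more for $P_p$ after noting that the $Q$-averages are increasing in $\sigma$. The only addition is your explicit finite-dimensional reduction via martingale convergence, which the paper leaves implicit.
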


\begin{proof}
Let $\sigma\in\Sigma$ be fixed.
For an observable $h$, consider the projection $h_\sigma:\Omega\to\R$ given by $h_\sigma=h(\cdot,\sigma)$.
Observe that, if $h$ is robust and increasing in $\omega$, then $h_\sigma$ depends only on $\omega_z$ for $z$ outside the set $N_\sigma \subseteq \ZZ^2$ of squares of type N.
Moreover, there is a natural partial order on $\{\vcenter{\hbox{\includegraphics[page=1,height=1.2em]{figures/diagonals}}},\vcenter{\hbox{\includegraphics[page=2,height=1.2em]{figures/diagonals}}}\}^{N_\sigma^c}$ under which $h_\sigma$ is an increasing function ($\vcenter{\hbox{\includegraphics[page=1,height=1.2em]{figures/diagonals}}}\leq \vcenter{\hbox{\includegraphics[page=2,height=1.2em]{figures/diagonals}}}$ for $z\in N_\sigma^c$ of type A and $\vcenter{\hbox{\includegraphics[page=1,height=1.2em]{figures/diagonals}}}\geq \vcenter{\hbox{\includegraphics[page=2,height=1.2em]{figures/diagonals}}}$ for $z\in N_\sigma^c$ of type B).
Since $Q$ induces a product measure on $\{\vcenter{\hbox{\includegraphics[page=1,height=1.2em]{figures/diagonals}}},\vcenter{\hbox{\includegraphics[page=2,height=1.2em]{figures/diagonals}}}\}^{N_\sigma^c}$, projections of this type satisfy the Harris-FKG inequality with respect to $Q$.
Therefore, if $f$ and $g$ are non-negative, robust and increasing in $\sigma$ and $\omega$,
\[
  \Pb_p(fg)= P_p[Q(f_\sigma g_\sigma)] \geqslant P_p[Q(f_\sigma)Q(g_\sigma)] \geqslant P_p[Q(f_\sigma)]P_p[Q(g_\sigma)] = \Pb_p(f)\Pb_p(g).
\]
We have used Fubini-Tonelli theorem for the equalities.
The first inequality follows from the above observation and the second inequality follows from the standard Harris-FKG inequality, since $f$ and $g$ are increasing in $\sigma$.
\end{proof}


\section{Proof of sharp percolation threshold}

\begin{lemma}
[Russo-Seymour-Welsh]
\label{lemmarsw}
In any $2n \times n$ rectangle,
\[
\Pb_{\frac{1}{2}}
\Bigg(
\vcenter{\hbox{\includegraphics[page=1,width=.4\textwidth]{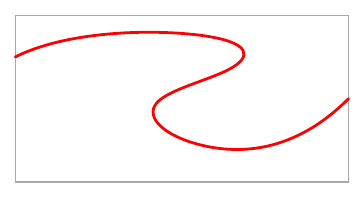}}}
\Bigg)
\geqslant
\frac{1}{16}
.
\]
\end{lemma}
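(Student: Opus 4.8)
The plan is to follow Smirnov's streamlined version of the Russo--Seymour--Welsh argument, adapting each step to the annealed measure by checking that all crossing events used are \emph{robust} and increasing, so that Lemma~\ref{lemmafkg} applies in place of the usual Harris--FKG inequality. The starting point is self-duality at $p=\frac12$: in an $n\times n$ square the event of a left--right red crossing and the event of a top--bottom blue crossing partition the sample space, and both are robust (crossing-in-a-domain events, as noted in the paragraph before the lemma); by the colour symmetry of $\Pb_{1/2}$ (which swaps red $\leftrightarrow$ blue and simultaneously relabels type-A squares as type-B, preserving $Q$) the two have equal probability, so each is at least $\frac12$. This gives a crossing of the \emph{hard} direction of the square with probability $\geqslant\frac12$; the work is to upgrade this to a crossing of the \emph{long} direction of a $2n\times n$ rectangle, losing only a bounded factor.

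The key combinatorial step is the standard gluing: cover the $2n\times n$ rectangle by two overlapping $n\times n$ squares sharing a central $n\times n$ region (or use the $\frac32 n\times n$ intermediate rectangle, whichever bookkeeping is cleaner). One takes a vertical red crossing of the left square and a vertical red crossing of the right square, and one needs a red connection joining them across the overlap; Smirnov's trick is to use a red crossing of the overlap square together with a symmetry/reflection argument (conditioning on the lowest crossing, or a square-root trick on the event that a crossing touches a prescribed half of a side) so that two of the relevant partial-crossing events can be combined. Every event entering this argument --- vertical crossings of subsquares, horizontal crossings of the overlap, connections from a crossing to a boundary arc --- is a ``connected by a red path in $\mathcal D$'' event for an explicit domain $\mathcal D$ made of unit squares, hence robust and increasing, so each application of Harris--FKG is licensed by Lemma~\ref{lemmafkg} and each square-root trick is licensed by the resulting FKG bound $\Pb_{1/2}(\mathcal A)\geqslant 1-\sqrt{1-\Pb_{1/2}(\mathcal A_1\cup\mathcal A_2)}$ type inequality. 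Combining a bounded number (here the constant $16$ suggests four or so events at probability $\geqslant\frac12$, i.e.\ $(\frac12)^4$) yields the claimed $\frac1{16}$; I would present the chain of events with the figures already referenced (\texttt{rswproof}) and record the exact product of constants only at the end.

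The main obstacle, and the only place the disordered triangulation genuinely intrudes, is verifying that the colour-swap symmetry really is a symmetry of $\Pb_{1/2}$ in the sense needed: swapping all colours must be accompanied by the correct transformation of the diagonal configuration so that type-A and type-B squares exchange roles while type-N squares are unaffected, and one must check from the explicit lists of type-A, type-B and type-N configurations in Section~3 that this is consistent and measure-preserving under $Q\times P_{1/2}$. Equivalently, one must make sure the diagonals one is ``using'' in a red crossing on one side of a duality statement are exactly the ones a blue crossing would use on the other side --- this is precisely the self-duality of site percolation on a triangulation, illustrated by the figure in Section~1, but here it has to be reconciled with the annealed averaging over $\omega$. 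Once that symmetry is in hand, the remaining steps are the textbook RSW gluing with ``robust'' substituted for ``increasing'' throughout, and no new probabilistic input is required beyond Lemma~\ref{lemmafkg}.
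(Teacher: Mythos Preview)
Your outline has the right ingredients---robustness of the crossing events, Lemma~\ref{lemmafkg} in place of ordinary Harris--FKG, self-duality at $p=\tfrac12$, and a Smirnov-style gluing---but it skates over the one step where the random diagonals genuinely intrude. You offer ``conditioning on the lowest crossing, or a square-root trick'' as interchangeable off-the-shelf moves; in this model the conditioning is the heart of the matter and is \emph{not} standard. With random diagonals, the lowest red crossing of a square is not a function of the colours alone: even locating it forces you to reveal diagonals along and near the path, and once those diagonals are exposed the region ``above'' the crossing is no longer an independent fresh sample of $\Pb_{1/2}$. The paper resolves this with an explicit exploration process, displayed in~\eqref{eq:exploration}, that walks through the triangulation revealing the diagonal of a square only when the exploration enters that square and the colour of a vertex only when it is needed to decide the next step. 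This is what guarantees that the unexplored region is still $\Pb_{1/2}$-distributed, modulo a few already-revealed red sites on the interface which can only help and are absorbed by one further appeal to Lemma~\ref{lemmafkg}. The two factors of $\tfrac12$ for each half of the rectangle, and hence the constant $\tfrac1{16}=(\tfrac12\cdot\tfrac12)^2$, come directly from this exploration-plus-symmetry step.

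Your identification of the ``main obstacle'' is therefore misplaced. The colour-swap symmetry of $\Pb_{1/2}$ is routine: swap the colours and leave $\omega$ untouched; types~A and~B exchange, type~N stays type~N, and $Q\times P_{1/2}$ is preserved. What is \emph{not} routine, and is precisely the new probabilistic input you claim is unnecessary, is the exploration. A pure square-root-trick route might in principle dodge conditioning, but you have not spelled one out, it would not produce the constant $\tfrac1{16}$, and the gluing still has to be closed somehow---typically by conditioning on a discovered interface, which brings you straight back to the same difficulty.
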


In the proof we consider an exploration that progressively reveals the color of some sites and the position of some diagonals. Below we show an exploration starts from the top-left corner and targets the bottom-right corner of a rectangle. When the exploration enters a triangle by crossing one of its sides, it looks at the color of the opposite corner in order to decide on where to exit the triangle. When it enters a square, it first reveals the position of the diagonal on that square first.
\begin{equation}
\label{eq:exploration}
\vcenter{\hbox{\includegraphics[page=1,width=.4\textwidth]{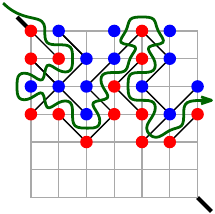}}}
\end{equation}
In this procedure, the exploration will leave the rectangle through the right side before the bottom side if there is a left-right red connection, and the bottom side before the right otherwise.

\begin{proof}
[Proof of Lemma~\ref{lemmarsw}]
We use $\Pb$ for $\Pb_{\frac{1}{2}}$.
By Harris-FKG we have
\[
\Pb
\Bigg(
\vcenter{\hbox{\includegraphics[page=1,width=7em]{figures/rswproof}}}
\Bigg)
\geqslant
\Pb
\Bigg(
\vcenter{\hbox{\includegraphics[page=2,width=7em]{figures/rswproof}}}
\Bigg)
\times
\Pb
\Bigg(
\vcenter{\hbox{\includegraphics[page=3,width=7em]{figures/rswproof}}}
\Bigg)
,
\]
whence by symmetry it suffices to show that
\begin{equation}
\label{eq:weakconnection}
\Pb
\Bigg(
\vcenter{\hbox{\includegraphics[page=2,width=9em]{figures/rswproof}}}
\Bigg)
\geqslant
\frac{1}{4}
.
\end{equation}
We first try to ``bend'' the left-side boldface region by starting an exploration path in the left-side square as shown in~(\ref{eq:exploration}). With probability $\frac{1}{2}$ we succeed bending the boldface region until the middle of the rectangle, revealing some diagonals and some blue and red sites like this:
\[
{\hbox{\includegraphics[page=4,width=.501\textwidth]{figures/rswproof}}}
.
\]
Given a partial configuration such as above, the event in~(\ref{eq:weakconnection}) is equivalent to a red connection between the two boldface regions.
So we need to show that the conditional probability of such red connection is at least $\frac{1}{2}$.
But such connection is certainly implied by red path connecting two smaller boldface regions contained in a smaller grayed zone given by
\[
{\hbox{\includegraphics[page=5,width=.501\textwidth]{figures/rswproof}}}
.
\]
Now notice that none of the sites and diagonals revealed so far can interfere with this event, except for some red sites lying on the boldface region.
The fact that these sites are red can only help, and the conditional probability of the latter event given that they are red is bounded from below by the probability of the crossing
\[
\vcenter{\hbox{\includegraphics[page=6,width=.501\textwidth]{figures/rswproof}}}
\]
without any conditioning.
Finally, the complementary of the latter event is
\[
\includegraphics[page=7,width=.501\textwidth]{figures/rswproof},
\]
which by symmetry has the same probability, concluding the proof.
\end{proof}

\pagebreak[3]
As usual, Lemma~\ref{lemmarsw} has the following immediate corollaries.

\begin{corollary}
\label{cor:longish}
There is $\delta>0$ such that, in any $8n \times n$ rectangle,
\[
\Pb_{\frac{1}{2}}
\bigg(
\vcenter{\hbox{\includegraphics[page=1,width=.5\textwidth]{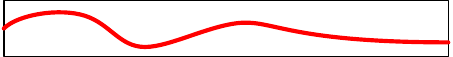}}}
\bigg)
\geqslant
\delta
.
\]
\end{corollary}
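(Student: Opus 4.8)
The plan is to run the usual ``combination of crossings'' (gluing) argument on top of Lemma~\ref{lemmarsw}. By translation invariance of $\Pb_{\frac12}$ we may place the rectangle at $[0,8n]\times[0,n]$. Cover it by the seven overlapping $2n\times n$ sub-rectangles $H_i=[ni,\,ni+2n]\times[0,n]$, $i=0,\dots,6$, together with the six $n\times n$ squares $V_i=[ni,\,ni+n]\times[0,n]$, $i=1,\dots,6$, noting that $V_i=H_{i-1}\cap H_i$ is exactly the overlap of two consecutive rectangles. For each $i$ let $R_i$ be the event that the left edge of $H_i$ is joined to its right edge by a red path inside $H_i$, and let $S_i$ be the event that the bottom edge of $V_i$ is joined to its top edge by a red path inside $V_i$. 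I would show that $\bigcap_{i=0}^{6}R_i\cap\bigcap_{i=1}^{6}S_i$ forces a left--right red crossing of $[0,8n]\times[0,n]$, and then bound the probability of this intersection from below by Harris--FKG.

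For the deterministic (geometric) step: on $R_{i-1}$ the red left--right crossing of $H_{i-1}$ runs from $x=ni-n$ to $x=ni+n$, hence its restriction to $V_i$ is a red path crossing the square $V_i$ from its left edge to its right edge; likewise $R_i$ provides a red left--right crossing of $V_i$, and $S_i$ a red top--bottom crossing of $V_i$. Since the ambient graph is a triangulation, any red left--right path and any red top--bottom path of the square $V_i$ must share a vertex (this is precisely the self-duality used throughout the paper). Chaining, the crossing from $R_{i-1}$ meets the one from $S_i$, which meets the one from $R_i$, for every $i=1,\dots,6$; the resulting red connected set reaches the left edge of $H_0$ (the left edge of the big rectangle) and the right edge of $H_6$ (its right edge), which is the desired crossing.

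For the probabilistic step: each $R_i$ and $S_i$ is an event of the form ``$A$ is connected to $B$ by a red path in $\mathcal D$'' for a union of closed squares $\mathcal D$, hence robust and increasing in $\sigma$ and $\omega$ by the remark preceding Lemma~\ref{lemmafkg}. Applying that lemma to these thirteen events,
\[
\Pb_{\frac12}\big(\text{left--right red crossing of }[0,8n]\times[0,n]\big)\ \geqslant\ \prod_{i=0}^{6}\Pb_{\frac12}(R_i)\ \prod_{i=1}^{6}\Pb_{\frac12}(S_i).
\]
Lemma~\ref{lemmarsw} gives $\Pb_{\frac12}(R_i)\geqslant\frac1{16}$. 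For the squares, $\Pb_{\frac12}$ is invariant under exchanging the two colors (because $Q$ is symmetric in the two diagonal orientations) and under rotating $\Z^2$ by $90^\circ$ (which permutes the squares and is absorbed by the symmetry of $Q$); combined with the fact that in a square exactly one of a red top--bottom crossing and a blue left--right crossing occurs, this yields $\Pb_{\frac12}(S_i)=\frac12$ (a crude positive lower bound would already do). Hence the corollary holds with $\delta=\big(\frac1{16}\big)^{7}\big(\frac12\big)^{6}$.

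The only genuinely delicate point is the geometric gluing step — that two monochromatic crossings of a square in perpendicular directions necessarily intersect — which is exactly where the triangulation hypothesis enters; the remainder is routine bookkeeping: checking that the thirteen crossing events are robust and increasing so that Lemma~\ref{lemmafkg} applies, and counting how many overlapping sub-rectangles are needed to span the aspect ratio $8$.
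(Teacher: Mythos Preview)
Your proof is correct and is precisely the standard gluing argument the paper has in mind when it declares the corollary ``immediate'' from Lemma~\ref{lemmarsw}; the only nitpick is that the fact that a red left--right path and a red top--bottom path of $V_i$ must share a vertex is a consequence of planarity of the embedded graph (edges meet only at sites) rather than of self-duality proper, but this does not affect the argument.
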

\begin{corollary}
\label{cor:circuit}
There is $\delta>0$ such that, in any pair of co-centered squares of size $4n \times 4n$ and $6n \times 6n$,
\[
\Pb_{\frac{1}{2}}
\Bigg(
\vcenter{\hbox{\includegraphics[page=1,width=.3\textwidth]{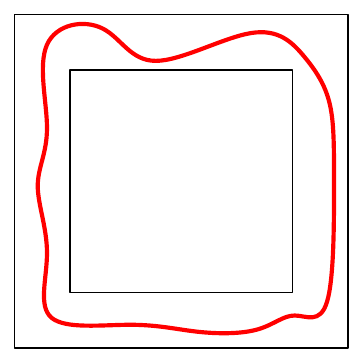}}}
\Bigg)
\geqslant
\delta
,
\]
\end{corollary}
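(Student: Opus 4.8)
The plan is to build the required red circuit from four red crossings of overlapping rectangles arranged around the annulus, and to bound the probability that all four occur simultaneously by Harris-FKG. After a translation we may assume the two squares are centred at the origin, so we want a red circuit surrounding $[-2n,2n]^2$ inside $[-3n,3n]^2$. I would work with the four strips
\[
R_1=[-3n,3n]\times[2n,3n],\quad R_2=[2n,3n]\times[-3n,3n],\quad R_3=[-3n,3n]\times[-3n,-2n],\quad R_4=[-3n,-2n]\times[-3n,3n],
\]
each a translated and possibly rotated copy of a $6n\times n$ rectangle, whose union is the closed annulus $[-3n,3n]^2\setminus(-2n,2n)^2$ and such that $R_i$ and $R_{i+1}$ (indices mod $4$) overlap in an $n\times n$ corner square. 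Let $\mathcal{C}_i$ be the event that $R_i$ has a red crossing in its long direction using only sites of $R_i$; since this is a red connection event in a fixed domain consisting of squares of $\ZZ^2$, it is (as recorded in Section~3) robust, non-negative and increasing in $\sigma$ and $\omega$.

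First I would establish the single-strip bound $\Pb_{\frac{1}{2}}(\mathcal{C}_i)\geqslant\delta_0$, where $\delta_0$ is the constant of Corollary~\ref{cor:longish}. Indeed, a $6n\times n$ rectangle is at least as easy to cross the long way as an $8n\times n$ one, because a long-way red crossing of the latter restricts to a long-way red crossing of a $6n\times n$ sub-rectangle sharing one of its short sides (a lattice path cannot jump over a horizontal or vertical line); and the invariance of $\Pb_{\frac{1}{2}}$ under integer translations and under the $90^{\circ}$ rotation, which exchanges the two diagonal orientations and hence leaves $Q$ invariant, covers all four strips. Applying Lemma~\ref{lemmafkg} to the four non-negative robust increasing indicators then gives
\[
\Pb_{\frac{1}{2}}\Big(\bigcap_{i=1}^{4}\mathcal{C}_i\Big)\;\geqslant\;\prod_{i=1}^{4}\Pb_{\frac{1}{2}}(\mathcal{C}_i)\;\geqslant\;\delta_0^{\,4}=:\delta .
\]

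It then remains to argue that on $\bigcap_i\mathcal{C}_i$ the four crossings glue into a red circuit surrounding $[-2n,2n]^2$. Restricting the long-way crossing of $R_i$ to the corner square $R_i\cap R_{i+1}$ produces, inside that square, a red crossing the long way and a red crossing the short way, one from each of the two strips; on a triangulation two red crossings of a quadrilateral in transversal directions cannot be vertex-disjoint — this is exactly the planar self-duality already used in Section~1, since a vertex-disjoint pair would be separated by a transversal blue crossing, contradicting that each is itself a red crossing. Hence the crossings of $R_i$ and of $R_{i+1}$ share a site $v_i$ in the $i$-th corner square, and concatenating the sub-arcs of the four crossings between consecutive $v_i$'s yields a closed red walk contained in $R_1\cup\dots\cup R_4$, i.e.\ in the annulus, which visits all four corners in cyclic order and therefore winds around the inner square; it thus contains a red circuit of the required kind, so the event of the statement has probability at least $\delta$.

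I expect the only genuinely delicate point to be this last topological step: verifying carefully that the overlapping long-way crossings must meet pairwise inside the corner squares, and that the closed walk so obtained is non-contractible in the annulus rather than bounding a disk disjoint from the inner square. Everything else is the routine assembly of Corollary~\ref{cor:longish}, the symmetries of $\Pb_{\frac{1}{2}}$, and the annealed Harris-FKG inequality; the final constant $\delta=\delta_0^{4}$ is of course not the same as the one in Corollary~\ref{cor:longish}, but that is immaterial.
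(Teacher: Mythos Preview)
Your argument is correct and is exactly the standard ``four overlapping rectangles plus FKG'' construction that the paper has in mind when it calls this an immediate corollary of Lemma~\ref{lemmarsw} (via Corollary~\ref{cor:longish}); the paper gives no proof of its own beyond that remark. The only small imprecision is your justification for why the two transversal red crossings of a corner square must share a vertex: this follows directly from planarity of the triangulation (only one diagonal is present per square, so paths can only cross at vertices), not really from the self-duality statement you invoke, but the conclusion is of course right.
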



The last piece in the proof is the following.

\begin{lemma}
[Kesten]
\label{lemma:kesten}
There is $\beta>0$ such that, for any $p\geqslant \frac{1}{2}$, in any $2n \times n$ rectangle, the expected number of pivotal sites satisfies
\[
\E_{p}
\Bigg(
\vcenter{\hbox{\includegraphics[page=1,width=.3\textwidth]{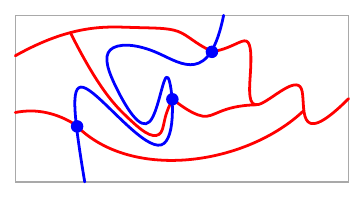}}}
\Bigg)
\geqslant
\beta \log_2 n
\times
\Pb_{p}
\Bigg( 
\vcenter{\hbox{\includegraphics[page=2,width=.3\textwidth]{figures/pivotals}}}
\Bigg)
.
\]
\end{lemma}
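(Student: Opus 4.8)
\emph{Proof plan.}
The plan is to run Kesten's multi-scale argument inside the rectangle, producing the required crossings with Lemma~\ref{lemmarsw} and Corollaries~\ref{cor:longish}--\ref{cor:circuit} and gluing them with the Harris--FKG inequality (Lemma~\ref{lemmafkg}); one records, as the construction proceeds, that every event invoked is of the form ``$A$ is connected to $B$ by a red (resp.\ blue) path inside a fixed domain'', hence robust and increasing in $\sigma$ (resp.\ robust and decreasing in $\sigma$), so that Lemma~\ref{lemmafkg} genuinely applies. Write $R$ for the $2n\times n$ rectangle, $\mathcal C$ for the red left--right crossing event of $R$, and $N$ for the number of $\mathcal C$-pivotal sites. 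By self-duality the complement of $\mathcal C$ is a blue top--bottom crossing of $R$, which has the same pivotal sites; the probability on the right of the statement is that of this dual (blue) crossing, which I denote $q_p$, so that $q_{1/2}\asymp 1$ by Lemma~\ref{lemmarsw} while $q_p$ becomes exponentially small in $n$ for $p>\frac12$. I will treat $p=\frac12$ in detail, as this is the tight case, and only comment on $p>\frac12$ at the end.

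First I would reduce to a per-scale estimate. Fix a site $o$ near the centre of the left half $[0,n]^2$ of $R$, and for $k=1,\dots,K$ with $K=\lfloor\log_2 n\rfloor-C_0$ --- chosen so that the ball $B(o,2^{k+1})$ stays inside $R$ with margins of width $\asymp n$ on all four sides --- set $\mathcal A_k=B(o,2^{k+1})\setminus B(o,2^k)$. These annuli are pairwise disjoint, so $N\geqslant\sum_{k=1}^{K}\mathbf 1\{\mathcal A_k\text{ contains a }\mathcal C\text{-pivotal site}\}$, and it suffices to exhibit, for each $k$, an event $G_k\subseteq\{\mathcal A_k\text{ contains a }\mathcal C\text{-pivotal site}\}$ with $\Pb_{1/2}(G_k)\geqslant c>0$ uniformly in $k$ and $n$; summing over the $\asymp\log_2 n$ scales then gives $\E_{1/2}(N)\geqslant cK\geqslant\beta\log_2 n\cdot q_{1/2}$.

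To build $G_k$ I would use the arm description of pivotality: $x$ is $\mathcal C$-pivotal exactly when four arms issue from $x$ and reach the four sides of $R$ in the cyclic order red-to-left, blue-to-bottom, red-to-right, blue-to-top --- the two red arms produce the crossing when $x$ is red, and the two blue arms, by self-duality, block every red crossing when $x$ is blue. Take $G_k=G_k^{\mathrm{in}}\cap G_k^{\mathrm{out}}$, where $G_k^{\mathrm{in}}$ depends only on the sites and diagonals inside $B(o,2^{k+1})$ and $G_k^{\mathrm{out}}$ only on those outside, so that the two are independent under $\Pb_{1/2}$. For $G_k^{\mathrm{in}}$ take the usual ``four-arm event in the annulus $\mathcal A_k$'': some designated site of $\mathcal A_k$ is joined, in the prescribed interleaved colour order, to four prescribed arcs of $\partial B(o,2^{k+1})$; since $\mathcal A_k$ has aspect ratio $2$, this is realised by red crossings of two bounded-aspect-ratio pieces of the annulus, blue crossings of two further such pieces, and short connections to the designated site, so $\Pb_{1/2}(G_k^{\mathrm{in}})\geqslant c_1>0$ uniformly in $k$ by Lemma~\ref{lemmarsw}. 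For $G_k^{\mathrm{out}}$ demand a red connection from the first red arc to $\partial_{\mathrm{left}}$, a red connection from the second red arc to $\partial_{\mathrm{right}}$, a blue connection from the first blue arc to $\partial_{\mathrm{top}}$, and a blue connection from the second blue arc to $\partial_{\mathrm{bottom}}$, each carried by a bounded number $m$ of pairwise disjoint rectangles and annulus-circuits of bounded aspect ratio inside $R\setminus B(o,2^{k+1})$, so that the red pieces are independent of the blue ones and each factor is $\geqslant\delta$ by Corollaries~\ref{cor:longish}--\ref{cor:circuit}, whence $\Pb_{1/2}(G_k^{\mathrm{out}})\geqslant\delta^{m}$ uniformly in $k$. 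On $G_k$ the designated site of $\mathcal A_k$ carries four interleaved arms to the four sides of $R$, hence is $\mathcal C$-pivotal, and $\Pb_{1/2}(G_k)\geqslant c_1\delta^{m}=:c$, which is exactly what the reduction needs.

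The step I expect to be the main obstacle is $G_k^{\mathrm{out}}$: one must verify, uniformly in $k$, that the red crossing of $R$ can be rerouted so as to enter $B(o,2^{k+1})$ through a prescribed short arc while leaving enough disjoint room inside $R$ for the two extra blue arms to $\partial_{\mathrm{top}}$ and $\partial_{\mathrm{bottom}}$, all at the price of only a bounded number of RSW crossings; and --- the genuinely delicate part --- that the interleaved cyclic order of the four arcs on $\partial B(o,2^{k+1})$ agrees between $G_k^{\mathrm{in}}$ and $G_k^{\mathrm{out}}$, so that their intersection really forces the designated site to be pivotal rather than merely producing four arms that could be short-circuited outside the annulus. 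Placing $o$ in $[0,n]^2$, where all four sides of $R$ lie at distance $\asymp n\geqslant 2^{K+1}$ from $o$ and hence occupy the expected cyclic positions, is what makes this bookkeeping go through. Finally, for $p>\frac12$ one runs the same construction only through the $\asymp\log_2(n\wedge L_p)$ scales below the correlation length $L_p$, where the environment is comparable to critical; for $n\gtrsim L_p$ a separate, cruder estimate gives the stated bound because $q_p$ is then exponentially small in $n$ --- I would carry out only $p=\frac12$ in full, the rest being routine.
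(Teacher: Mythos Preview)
Your approach has a genuine gap at the heart of the per-scale estimate. You claim that the event $G_k^{\mathrm{in}}$ --- four alternating arms from a single designated site in $\mathcal A_k$ to four prescribed arcs on $\partial B(o,2^{k+1})$ --- has probability bounded below by some $c_1>0$ uniformly in $k$, and that this follows from RSW. It does not. For the cyclic order to match $G_k^{\mathrm{out}}$ the four arcs must sit on different sides of $\partial B(o,2^{k+1})$, so each arm has length of order $2^k$, and what you are asking for is the polychromatic four-arm event from a point at scale $2^k$. That probability decays polynomially in $2^k$ (exponent $5/4$ on the triangular lattice). Your suggested construction --- red crossings of two pieces, blue crossings of two others, then ``short connections to the designated site'' --- fails exactly at the last step: the four monochromatic crossings live in disjoint sub-domains and therefore do not all pass through any common site; forcing them to meet at one site is itself a four-arm event at a smaller scale, and iterating gives no uniform bound. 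RSW alone cannot produce a uniform lower bound for four arms from a point.

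The paper's proof sidesteps this entirely by working conditionally. One conditions on the blue top--bottom crossing and reveals it via an exploration from the top-left corner; the leftmost blue crossing then carries, for free, two blue arms (along the path, to top and bottom) and one red arm (its red boundary, connected to the left side of the rectangle). What remains is to supply, at each of $\asymp\log_2 n$ scales, a single \emph{red} connection from the right side of the rectangle to this interface inside the unexplored region --- a one-colour connection event, bounded below uniformly by Corollaries~\ref{cor:longish} and~\ref{cor:circuit}. Every scale at which such a red connection lands on the interface yields a pivotal site there. Because the argument is conditional on the blue crossing and the remaining red-connection bounds are monotone in $p$, this handles all $p\geqslant\frac12$ at once; no correlation length or separate subcritical estimate is needed.
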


\begin{proof}
It suffices to show that
\[
\E_{p}
\Bigg(
\vcenter{\hbox{\includegraphics[page=1,width=.351\textwidth]{figures/pivotals}}}
\Bigg|
\vcenter{\hbox{\includegraphics[page=2,width=.351\textwidth]{figures/pivotals}}}
\Bigg)
\geqslant
\beta \log_2 n
.
\]
We will determine occurrence of the top-bottom blue crossing using an exploration path that starts at the top-left corner and ends at either the bottom or the right side, as below.
\[
\includegraphics[page=1,width=.711\textwidth]{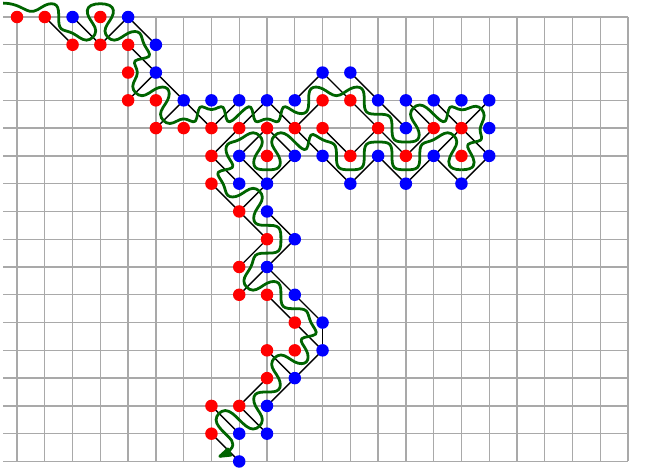}
\]
Existence of a top-bottom blue crossing is equivalent to the exploration finding the bottom side before the right side.
We want to show that, if it such crossing occurs, then the conditional expectation of the number of pivotal sites given the colors and diagonals revealed in this exploration is greater than $\beta \log_2 n$ for some constant $\beta$.

On the above event, there is a self-avoiding blue path that joins the top and bottom sides of the rectangle.
What we do now is a little overkilling, but it avoids the hassle of considering all corner cases related to the diagonals.
Let us first inflate this self-avoiding blue path to make it squared where it would otherwise use a diagonal, as below.
\[
\includegraphics[page=2,width=.711\textwidth]{figures/pivotalsproof}
\]
Notice that there are two types of sites in this squared path.
The first type consists of blue sites which are adjacent to a red site which is in turn connected to the left side of the rectangle by a red path.
The second type consists of sites whose color has not yet been revealed, but which are adjacent to a site of the first type.

Now, in order to find pivotal sites, we consider the domain consisting of the squares that can be reached from the right side of the big rectangle without crossing the squared path, colored in light-gray below.
\[
\includegraphics[page=3,width=.711\textwidth]{figures/pivotalsproof}
\]
We will ``re-sample'' the entire process on this domain. Of course we cannot do that, so the red sites that we find on the squared path that turn out to have been previously sampled as blue will remain blue in the end.

We first look for a connection from the right side of the big rectangle to the squared-path that stays in a strip of width $\frac{n}{4}$ using an exploration path as below.
\[
\includegraphics[page=4,width=.711\textwidth]{figures/pivotalsproof}
\]
The last red site was drawn smaller to remind us that it may be a site which we already revealed to be blue.
By Corollary~\ref{cor:longish}, the probability of finding such a connection is greater than $\delta$.

In case such path is found, we now draw disjoint $4\cdot2^k$-sided and $6\cdot2^k$-sided squares centered at the point just found, and consider the intersection of the corresponding annuli with the light-gray region in the second previous picture. There are at least $\delta \log_2 n$ such regions that do not go lower than the bottom side of the big rectangle, minus the squares explored in the previous step, as shown below.
\[
\includegraphics[page=5,width=.711\textwidth]{figures/pivotalsproof}
\]

Conditioned on the above picture, by Corollary~\ref{cor:circuit} each of these ``tunnels'' will contain a red connection with probability at least $\delta$.
The conditional expectation for the number of such tunnels that actually contain such connection, given that the previous connection has been found, is thus greater than $\delta^2 \log_2 n$.
Therefore, the conditional expectation given existence of a top-bottom blue crossing is greater than $\delta^3 \log_2 n$.

In the example below, two such tunnels ended up providing one red site (drawn smaller), and one of them did not.
\[
\includegraphics[page=6,width=.711\textwidth]{figures/pivotalsproof}
\]

We combine the configuration discovered at this stage with the one previously removed. At this point, a small red site will become a true red site if it had not been revealed in the first exploration, and will be reverted to blue in case it had. The result is highlighted by a light-gray disk below.
\[
\includegraphics[page=7,width=.711\textwidth]{figures/pivotalsproof}
\]
To conclude the proof, notice that each of these highlighted sites is either a pivotal site in case it is blue, or is preceded by a pivotal site in the squared curve in case it is red.
\end{proof}

\bigskip

\begin{proof}
[Proof of Theorem~\ref{theopc}]
Absence of percolation at $p=\frac{1}{2}$ follows from Corollary~\ref{cor:circuit} as usual.
Define the event
\[
\mathcal{A}_n
=
\vcenter{\hbox{\includegraphics[page=2,width=.3\textwidth]{figures/pivotals}}}
\]
in a $2n \times n$ rectangle.
Using Russo's formula and Lemma~\ref{lemma:kesten} we get
\[
\tfrac{\dd}{\dd p} \Pb_p(\mathcal{A}_n)
=
-
\E_p\big[\,\text{pivotal sites}\,\big]
\leqslant
-
\beta \log_2 n \, \Pb_p(\mathcal{A}_n)
,
\]
which gives
\[
\log \Pb_{\frac{1}{2}+\epsilon}(\mathcal{A}_n) \leqslant -\epsilon \beta \log_2 n
,
\]
and thus, on an $2^k \times 2^{k+1}$ rectangle,
\[
\Pb_{\frac{1}{2}+\epsilon}
\Bigg(
\vcenter{\hbox{\includegraphics[page=1,width=.3\textwidth]{figures/rswproof}}}
\Bigg)
\geqslant
1-e^{\displaystyle -\epsilon \beta k}
.
\]
To conclude the proof, we arrange $2^k \times 2^{k+1}$ rectangles as
\[
{\hbox{\includegraphics[page=1,width=.5\textwidth]{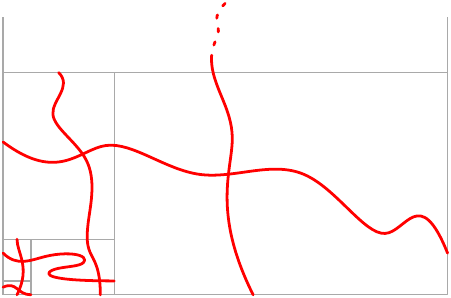}}}
\]
and deduce the second part of the theorem using Harris-FKG inequality.
\end{proof}

\section{Extensions and alternative approaches}

Recent work on Bernoulli percolation in $\Z^d$ and Voronoi percolation in $\R^2$ provided new insights for the study of sharp phase transitions~\cite{Duminil-CopinTassion16,Tassion16}. It would be interesting to apply those ideas to disordered triangulations of the square lattice, and obtain alternative proofs or extend the present results to a more general setting.


\section*{Acknowledgement}

I would like to thank Wendelin Werner for suggesting this problem to me back in 2009, and for inspiring discussions.
I also thank him for pointing out the possible connections discussed above.


{
\renewcommand{\baselinestretch}{1}
\setlength{\parskip}{0cm}
\small
\par
\bibliographystyle{abbrv}
\bibliography{bib/leo}
\par
}

\end{document}